\def\norm#1{\left\Vert#1\right\Vert}
\newtheorem{theorem}{Theorem}
\newtheorem{lemma}[theorem]{Lemma}
\theoremstyle{definition}
\numberwithin{equation}{section}
\begin{document}


\baselineskip=17pt


\title{A note on the groups of finite type and the Hartman--Mycielski construction}

\author{Vladimir G. Pestov\\
Instituto de Matem\'atica e Estat\'\i stica,\\
 Universidade Federal da Bahia,\\
 Ondina, Salvador, BA, 40.170-115, Brasil \\
{\em and} \\
Department of Mathematics and Statistics,\\
       University of Ottawa,\\
       Ottawa, ON, K1N 6N5, Canada\\
E-mail: vpest283@uottawa.ca}

\date{Version of June 14, 2020}

\maketitle


\renewcommand{\thefootnote}{}

\footnote{2020 \emph{Mathematics Subject Classification}: Primary 22A10; Secondary 46L10.}

\footnote{\emph{Key words and phrases}: group of finite type, SIN group, unitarily representable group, Hartman--Mycielski construction.}

\renewcommand{\thefootnote}{\arabic{footnote}}
\setcounter{footnote}{0}


\begin{abstract}
Ando, Matsuzawa, Thom, and T\"ornquist have resolved a problem by Sorin Popa by constructing an example of a Polish group of unitary operators with the strong operator topology, whose left and right uniform structures coincide, but which does not embed into the unitary group of a finite von Neumann algebra. The question remained whether such a group can be connected. Here we observe that a connected (in fact, homeomorphic to the Hilbert space) example is obtained from the example of the above authors via the Hartman--Mycielski construction. 
\end{abstract}

\section*{}

A Polish topological group is of {\em finite type} if it topologically embeds into the unitary group, $U(A)$, of a finite von Neumann algebra, equipped with the strong topology. Any such group $G$ is a {\em SIN group}, that is, a base of neighbourhoods at identity is formed by open sets $V$ invariant under conjugations: $g^{-1}Vg=V$ for all $g\in G$. Also, $G$ is {\em unitarily representable}, that is, it admits a unitary representation $\pi\colon G\to U({\mathcal H}_{\pi})$ that is an embedding of topological groups with regard to the strong operator topology. For a while it remained unclear if those two properties were enough to characterize the groups of finite type. The question was asked in print by Popa \cite{popa} and resolved in the negative in the article \cite{AMTT}, to which we refer for more detailed definitions and references. 

The central result of the article states that the semidirect product $\Gamma\ltimes_{\pi}\ell^2$ of a discrete group $\Gamma$ and the additive topological group of the Hilbert space $\ell^2$ with regard to a representation $\pi\colon\Gamma\to GL(\ell^2)$ is unitarily representable and SIN if and only if $\pi$ is bounded, and is of finite type if and only if $\pi$ is unitarizable. Since there are many known examples of bounded non-unitarizable representations, this construction provides a counter-example to Popa's question. However, the Polish group $\Gamma\ltimes_{\pi}\ell^2$ is disconnected, so the authors have asked whether a connected Polish group with the same combination of properties exists. 

Here we note that such an example is derived from
the example of the above authors through a classical construction going back to Hartman and Mycielski \cite{HM}. 
In this paper, it was shown that if $G$ is any topological group, then the group of all maps from the interval $[0,1]$ to $G$, constant on half-open intervals and taking finitely many values, equipped with the topology of convergence in Lebesgue measure, is a path-connected and locally path connected topological group containing $G$ as a topological subgroup formed by constant maps. It was further remarked in \cite{BM} that this group is contractible and locally contractible. 

One can form a larger group, consisting of all (equivalence classes of) strongly (or: Bourbaki) measurable maps $f\colon X\to G$ from a standard Lebesgue space $(X,\mu)$ to $G$, meaning that for every $\varepsilon>0$ there is a compact subset $K\subseteq X$ of measure $>1-\varepsilon$ on which $f$ is continuous. We denote this group $L^0(X,\mu,G)$. The topology is still the topology of convergence in measure, whose basis is formed by the sets
\[[V,\varepsilon] = \{f\in L^0(X,\mu,G)\colon \mu\{x\in X\colon f(x)\in V\}>1-\varepsilon\},\]
where $V$ runs over a neighbourhood base at the identity of $G$ and $\varepsilon >0$.
The group $L^0(X,\mu,G)$ contains the original group of Hartman--Mycielski as an everywhere dense subgroup. Even in this generality, the construction yields interesting results, see e.g. \cite{PS}.

If a neighbourhood of identity $V$ is conjugation invariant, then so is $[V,\varepsilon]$, so we have:

\begin{lemma}
If a topological group $G$ is SIN, then the group $L^0(X,\mu,G)$ is SIN as well.
\label{l:sin}
\end{lemma}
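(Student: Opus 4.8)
The plan is to verify directly that the conjugation-invariant members of the standard neighbourhood basis of $L^0(X,\mu,G)$ already form a neighbourhood basis at the identity, exactly as the parenthetical remark preceding the statement suggests. First I would recall that, $G$ being SIN, the conjugation-invariant open neighbourhoods $V$ of the identity of $G$ form a neighbourhood basis at $e_G$; consequently the sets $[V,\varepsilon]$ with $V$ ranging over this smaller basis and $\varepsilon>0$ still form a neighbourhood basis at the identity of $L^0(X,\mu,G)$, since replacing $V$ by a smaller neighbourhood shrinks $[V,\varepsilon]$.

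The heart of the argument is a pointwise observation. Fix a conjugation-invariant $V\subseteq G$ and $\varepsilon>0$, and let $h\in L^0(X,\mu,G)$ be arbitrary. For every $x\in X$ we have $h(x)^{-1}Vh(x)=V$, so $f(x)\in V$ if and only if $(h^{-1}fh)(x)=h(x)^{-1}f(x)h(x)\in V$. Hence the measurable sets $\{x\in X\colon f(x)\in V\}$ and $\{x\in X\colon (h^{-1}fh)(x)\in V\}$ coincide, and in particular have equal measure; therefore $f\in[V,\varepsilon]$ if and only if $h^{-1}fh\in[V,\varepsilon]$, i.e. $h^{-1}[V,\varepsilon]h=[V,\varepsilon]$. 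As $h$ was arbitrary, $[V,\varepsilon]$ is conjugation-invariant, and together with the first step this gives that $L^0(X,\mu,G)$ is SIN.

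I do not expect a genuine obstacle here. The only points needing a word of care are that $L^0(X,\mu,G)$ is indeed a topological group — so that $h^{-1}fh$ lands back in it and the $[V,\varepsilon]$ are honest neighbourhoods — which is part of the Hartman–Mycielski construction recalled above, and the observation that, conjugation by a fixed $h$ being a homeomorphism, one may pass to interiors without losing conjugation-invariance should one insist that the invariant base consist of open sets.
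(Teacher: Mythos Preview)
Your proof is correct and is precisely the argument the paper has in mind: the paper's proof is the single sentence preceding the lemma, ``If a neighbourhood of identity $V$ is conjugation invariant, then so is $[V,\varepsilon]$,'' and you have simply spelled this out in full detail.
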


If $G$ is metrizable, so is the group $L^0(X,\mu;G)$. For example, if $d$ is a right-invariant metric generating the topology of $G$, then the following metric generates the topology of convergence in measure and is right-invariant (we follow Gromov's notation \cite{Gr}, p. 115):
\[\mbox{me}_1(f,g) =\inf\left\{\varepsilon>0\colon \mu\{x\in X\colon d(f(x),g(x))>\varepsilon\}<\varepsilon\right\}.\]
For $G$ separable metric, strongly measurable maps from $(X,\mu)$ to $G$ are of course just the $\mu$-measurable maps.
Here is one of the main results about the Hartman--Mycielski construction.

\begin{theorem}[Bessaga and Pe{\l}czy{\'n}ski \cite{BP}]
Whenever the group $G$ is separable metrizable, the topological group $L^0(X,\mu;G)$ is homeomorphic to the separable Hilbert space $\ell^2$ provided $G$ is nontrivial. 
\label{th:BP}
\end{theorem}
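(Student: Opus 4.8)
The plan is to verify that the Polish group $M:=L^{0}(X,\mu;G)$ satisfies the hypotheses of the topological characterisation of $\ell^{2}$ and then let infinite-dimensional topology do the rest. Two preliminary reductions. Since $M$ depends on $(X,\mu)$ only through its measure algebra, and since for a countable partition $X=\bigsqcup_{n}X_{n}$ into positive-measure pieces one has $M\cong\prod_{n}L^{0}(X_{n},\mu|_{X_{n}};G)$ with the product topology (convergence in measure on $X$ being exactly convergence in measure on each piece), we may take $(X,\mu)=([0,1],\lambda)$, the non-atomic part being what carries the infinite-dimensionality; rescaling the pieces even gives $M\cong M^{\omega}$. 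Secondly, since $\ell^{2}$ is Polish we lose nothing by assuming $G$ completely metrizable (the case of interest being $G$ Polish); then $M$ is separable, since the simple functions taking values in a countable dense subset of $G$ and subordinate to a countable dense subalgebra of the measure algebra are dense, and $M$ is completely metrizable, since an $\mathrm{me}_{1}$-Cauchy sequence has an almost everywhere convergent subsequence whose pointwise limit, formed in the complete group $G$, represents the limit in $M$.

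It remains to record three geometric properties of $M$. \emph{(a) $M$ is contractible and locally contractible.} The Hartman--Mycielski homotopy
\[ H\colon[0,1]\times M\longrightarrow M,\qquad H_{t}(f)(s)=\begin{cases} f(s),& s<t,\\ e,& s\ge t,\end{cases}\]
is jointly continuous, by the right-invariance of $\mathrm{me}_{1}$ and the non-atomicity of $\lambda$, and satisfies $H_{1}=\mathrm{id}_{M}$ and $H_{0}\equiv e$; since each basic neighbourhood $[V,\varepsilon]$ of $e$ is carried into itself by $H$, such neighbourhoods are contractible in themselves, so $M$ is locally contractible at $e$, hence everywhere. \emph{(b) $M$ is an absolute retract.} This is the technical heart: a separable, completely metrizable, locally contractible topological group is an ANR (the Dobrowolski--Toru\'nczyk circle of results on the topology of Polish groups), and a contractible ANR is an AR. \emph{(c) $M$ is not locally compact.} Fix $g\neq e$, which is where the hypothesis $G\neq\{e\}$ enters. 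Inside an arbitrary basic neighbourhood $[V,\varepsilon]$ of $e$, use the non-atomicity of $\lambda$ to choose measurable sets $E_{1},E_{2},\dots$ of measure $<\varepsilon$ with $\lambda(E_{n}\triangle E_{m})$ bounded below for $n\neq m$; the functions equal to $g$ on $E_{n}$ and to $e$ elsewhere lie in $[V,\varepsilon]$ and are pairwise at $\mathrm{me}_{1}$-distance bounded below, so no neighbourhood of $e$ is totally bounded, hence, by completeness, none is precompact.

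By the Dobrowolski--Toru\'nczyk theorem that separable completely metrizable ANR groups are Hilbert manifolds, $M$ is an $\ell^{2}$-manifold (property (c) excludes a finite-dimensional one); being contractible, $M$ is homeomorphic to $\ell^{2}$ by Henderson's classification of $\ell^{2}$-manifolds up to homeomorphism by homotopy type. Equivalently, (a)--(c) permit a direct appeal to Toru\'nczyk's characterisation of $\ell^{2}$ among Polish absolute retracts, the discrete approximation property being supplied by the same freedom to redistribute mass used in (c). The one step that is not routine bookkeeping with convergence in measure is (b): the easy local contractibility of $M$ must be promoted to the ANR property, which is where the deep structure theory of infinite-dimensional Polish groups is used. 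An alternative, closer to Bessaga and Pe\l czy\'nski's original reasoning and avoiding Toru\'nczyk's later characterisation, is to build an absorbing system (a skeletoid) inside $M$ by hand, trading the appeal to (b) for a longer explicit construction.
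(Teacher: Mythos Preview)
The paper does not supply a proof of this theorem at all: it is quoted as a result of Bessaga and Pe{\l}czy{\'n}ski and used as a black box, so there is no ``paper's own proof'' to compare against. Your sketch is therefore not a reconstruction of anything in the present paper but an attempt to re-derive the cited 1972 result via later infinite-dimensional topology (Dobrowolski--Toru\'nczyk, Henderson). You are aware of this, since you flag the skeletoid/absorbing-set route as the one actually taken by Bessaga and Pe{\l}czy{\'n}ski.

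Two substantive comments on the sketch itself. First, your ``we lose nothing by assuming $G$ completely metrizable'' is not a convenience but a necessity: the constant maps embed $G$ as a \emph{closed} subgroup of $L^0(X,\mu;G)$, so if $L^0(X,\mu;G)\cong\ell^2$ then $G$ must already be Polish. As stated for arbitrary separable metrizable $G$ the conclusion would fail (take $G=\mathbb{Q}$), so one should read the hypothesis as ``separable completely metrizable''; for the application in the paper $G=\Gamma\ltimes_\pi\mathcal H$ is Polish anyway.

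Second, and more seriously, step~(b) is where the real work hides, and the appeal to ``the Dobrowolski--Toru\'nczyk circle of results'' is too vague to carry it. The theorem of Dobrowolski and Toru\'nczyk says that a Polish group which \emph{is} an ANR is an $\ell^2$-manifold; it does not say that a locally contractible Polish group is an ANR, and in fact whether every locally contractible Polish group is an ANR is, to my knowledge, not a standard theorem one can simply cite. For the particular space $M=L^0([0,1];G)$ there is extra structure you have already exhibited but not exploited: the Hartman--Mycielski homotopy gives a global equiconnection $\lambda(f,g,t)(s)=f(s)$ for $s<t$ and $g(s)$ for $s\ge t$, and it is this convexity-type structure (not the bare group structure) that lets one verify the AR property, or the discrete approximation property in Toru\'nczyk's characterisation, by hand. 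Either spell that out, or revert to the absorbing-set argument you mention at the end; as written, step~(b) invokes a theorem that does not exist in the form you need.
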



In fact, the above result was stated and proved for any separable metric space instead of $G$, but the motivation was to answer, in the affirmative, a question of Michael \cite{michael}: does every separable metrizable topological group embed into a topological group homeomorphic to $\ell^2$? 

The following observation is surely folklore, but we do not have a reference.  The argument is the same as in \cite{P10}, Lemma 6.5, for $G$ locally compact, and of course it could be generalized further if need be.

\begin{lemma} 
Let $G$ be a unitarily representable Polish group. Then $L^0(X,\mu;G)$ is unitarily representable as well.
\label{l:representable}
\end{lemma}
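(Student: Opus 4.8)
The plan is to realize $L^0(X,\mu;G)$ as a group of unitary operators on the separable Hilbert space $\mathcal{K}:=L^2(X,\mu;\mathcal{H}_\pi)$ of (equivalence classes of) square-integrable $\mathcal{H}_\pi$-valued maps on $X$, by applying $\pi$ pointwise. (Replacing $\mathcal{H}_\pi$ by a closed $\pi$-invariant subspace containing enough vectors to witness that $\pi$ is an embedding on a countable neighbourhood base at the identity of the second-countable group $G$, we may and do assume $\mathcal{H}_\pi$, hence $\mathcal{K}$, is separable.) For $f\in L^0(X,\mu;G)$ and $\xi\in\mathcal{K}$ put $(\Pi(f)\xi)(x):=\pi(f(x))\xi(x)$. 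Since $f$ is strongly (Bourbaki) measurable and $\pi$ is strong-operator continuous, $x\mapsto\pi(f(x))$ is a strongly measurable family of unitaries, the product $x\mapsto\pi(f(x))\xi(x)$ is again a measurable $\mathcal{H}_\pi$-valued map, and $\norm{\pi(f(x))\xi(x)}=\norm{\xi(x)}$ pointwise, so $\Pi(f)$ is a well-defined isometry of $\mathcal{K}$ with inverse $\Pi(f^{-1})$; thus $\Pi(f)\in U(\mathcal{K})$. Clearly $\Pi$ is a homomorphism $L^0(X,\mu;G)\to U(\mathcal{K})$, and it is injective since $\Pi(f)=\mathrm{id}$ forces $\pi(f(x))=\mathrm{id}$, hence $f(x)=e$, for a.e.\ $x$.

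Next I would verify that $\Pi$ is continuous for the topology of convergence in measure on the source and the strong operator topology on the target; as both groups are metrizable, sequences suffice. Let $f_n\to e$ in measure. The operators $\Pi(f_n)-\mathrm{id}$ are uniformly bounded by $2$, so a routine $\varepsilon/3$ argument (using that the $\Pi(f_n)$ are contractions) reduces the claim $\norm{\Pi(f_n)\xi-\xi}_{\mathcal K}\to 0$ to $\xi$ in the dense set of simple functions, and by linearity to $\xi=\mathbf 1_A v$ with $A\subseteq X$ measurable and $v\in\mathcal H_\pi$. For such $\xi$ and any $\delta>0$, strong continuity of $\pi$ at $e$ yields a neighbourhood $W$ of $e$ with $\norm{\pi(g)v-v}<\delta$ on $W$, whence
\[\norm{\Pi(f_n)\xi-\xi}_{\mathcal K}^2=\int_A\norm{\pi(f_n(x))v-v}^2\,d\mu\le\delta^2\mu(A)+4\norm{v}^2\,\mu\{x\in A:f_n(x)\notin W\},\]
and the last term tends to $0$ because $f_n\to e$ in measure; letting $\delta\to 0$ gives the claim.

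Finally, and this is the step that really uses that $\pi$ is an \emph{embedding} rather than just continuous, I would show that $\Pi$ is a homeomorphism onto its image by proving that $\Pi(f_n)\to\mathrm{id}$ in the strong operator topology forces $f_n\to e$ in measure. Here the right test vectors are the \emph{constant} maps: for $v\in\mathcal H_\pi$ the constant function $\xi_v\equiv v$ lies in $\mathcal K$ since $\mu$ is finite, and $\norm{\Pi(f_n)\xi_v-\xi_v}_{\mathcal K}\to 0$ says precisely that $\norm{\pi(f_n(\cdot))v-v}\to 0$ in $L^2(X,\mu)$, hence in measure. Now, given a neighbourhood $V$ of $e$ in $G$, the embedding property of $\pi$ provides $v_1,\dots,v_k\in\mathcal H_\pi$ and $\delta>0$ with $\{g\in G:\norm{\pi(g)v_i-v_i}<\delta\ \text{for all }i\}\subseteq V$, so that
\[\mu\{x:f_n(x)\notin V\}\le\sum_{i=1}^k\mu\{x:\norm{\pi(f_n(x))v_i-v_i}\ge\delta\}\longrightarrow 0.\]
Hence $f_n\to e$ in measure, $\Pi$ is a topological group embedding of $L^0(X,\mu;G)$ into $U(\mathcal K)$, and $L^0(X,\mu;G)$ is unitarily representable. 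The only slightly delicate points are the measurability bookkeeping that makes $\Pi(f)$ well defined and the two approximation reductions above; the genuinely essential idea is the use of constant test vectors together with the embedding property of $\pi$ in the last step.
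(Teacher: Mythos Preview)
Your proof is correct and follows essentially the same route as the paper: both define the representation on $L^2(X,\mu;\mathcal H_\pi)$ by applying $\pi$ pointwise, verify strong continuity via simple test functions, and establish the embedding property via \emph{constant} test vectors together with a Chebyshev-type estimate and a union bound over the finitely many vectors $v_1,\dots,v_k$ witnessing the embedding property of $\pi$. The only cosmetic difference is that the paper phrases the last step with explicit constants (choosing $\norm{\Pi(f)\bar\xi_i-\bar\xi_i}<\delta\sqrt{\varepsilon/n}$) rather than sequentially, but the content is the same.
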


\begin{proof}
Fix a topological group embedding $\rho\colon G\to U({\mathcal H})$. 
Define a unitary representation $\pi$ of $L^0(X,\mu;G)$ in $L^2(X,\mu;{\mathcal H})$ as the direct integral of copies of the representation $\rho$: for every $f\in L^0(X,\mu;G)$ and $\phi\in L^2(X,\mu;{\mathcal H})$,
\[\pi_f(\psi)(x) =\rho_{f(x)}(\psi(x)),~~\mbox{ for $\mu$-a.e. }x.\]
For each simple function $\psi\in L^2(X,\mu;{\mathcal H})$, one can see that the corresponding orbit map 
\[L^0(X,\mu;G)\ni f\mapsto \pi_f(\psi)\in L^2(X,\mu;{\mathcal H})\]
is continuous at identity, and consequently $\pi$ is strongly continuous. 

Fix a right-invariant compatible pseudometric $d$ on $G$, and let $\varepsilon>0$ be arbitrary. There are $\xi_1,\ldots,\xi_n\in {\mathcal H}$ and a $\delta>0$ such that
\[\forall h\in G,~~\mbox{ if }\norm{\rho_g(\xi_i)-\xi_i}<\delta\mbox{ for all }i,\mbox{ then }d(g,e)<\varepsilon.\]
Let $\bar\xi_i$ be a constant function on $X$ taking value $\xi_i$. For any function $f\in L^0(X,\mu;G)$, if
\[\forall i=1,2,\ldots,n~~\norm{\pi_f(\bar\xi_i)-\bar\xi_i}<
\delta\sqrt{\frac{\varepsilon}{n}},\]
then one must have 
\[\forall i,~~\norm{\rho_{f(x)}(\xi_i)-\xi_i}<\delta\]
on a set of $x$ having measure at least $1-\varepsilon/n$. This implies $\mbox{me}_1(f,e)<\varepsilon$,
concluding the argument.
\end{proof}

Now it is enough to apply the construction to the Polish group $\Gamma\ltimes_{\pi}{\mathcal H}$ from \cite{AMTT}. The topological group $L^0(X,\mu;\Gamma\ltimes_{\pi}{\mathcal H})$ is homeomorphic to the Hilbert space $\ell^2$ (theorem \ref{th:BP}), in particular Polish and contractible, is SIN (lemma \ref{l:sin}), unitarily representable (lemma \ref{l:representable}), yet not of finite type, because it contains $\Gamma\ltimes_{\pi}{\mathcal H}$ as a closed topological subgroup made of constant functions.


\normalsize
\baselineskip=17pt


\end{document}